\newtheorem{Theorem}{Theorem}
\newtheorem{theorem}[Theorem]{Theorem}
\newtheorem{lemma}[Theorem]{Lemma}
\newtheorem{remark}[Theorem]{Remark}
\newcommand{\R}{{\mathbb R}}
\newcommand{\eps}{\varepsilon}
\font\basic=cmr10
\begin{document}

\title{An optimal Liouville theorem \\ 
for the linear heat equation \\
with a nonlinear boundary condition}
\dedicatory{Dedicated to the memory of Pavol Brunovsk\'y}
\author{Pavol Quittner \\ \\ \box1 \\ \box2 \\ \box3}
\thanks{Supported in part by the Slovak Research and Development Agency 
        under the contract No. APVV-18-0308 and by VEGA grant 1/0347/18.} 
\date{}

\begin{abstract}
Liouville theorems for scaling invariant nonlinear parabolic
problems in the whole space and/or the halfspace (saying that the problem
does not posses positive bounded solutions defined for all times
$t\in(-\infty,\infty)$)  
guarantee optimal estimates of solutions of
related initial-boundary value problems in general domains.
We prove an optimal Liouville theorem
for the linear equation in the halfspace complemented
by the nonlinear boundary condition $\partial u/\partial\nu=u^q$, $q>1$. 

\medskip
\noindent \textbf{Keywords.} Liouville theorem, heat equation, nonlinear boundary condition 

\medskip
\noindent \textbf{AMS Classification.} 35K60, 35B45, 35B40
\end{abstract}

\maketitle

\section{Introduction and main results}
\label{intro}

\vskip5mm
Liouville theorems for scaling invariant superlinear parabolic
problems in the whole space and/or the halfspace 
(saying that the problem
does not posses positive bounded solutions defined for all times
$t\in(-\infty,\infty)$)
guarantee optimal estimates of solutions of
related initial-boundary value problems in general domains,
including estimates of singularities and decay, see \cite{PQS}
or \cite{QS19} and the references therein.
In the case of the model problem
$$ u_t-\Delta u=u^p, \qquad x\in\R^n,\ t\in\R, $$
where $p>1$, $n\geq1$ and $u=u(x,t)>0$,
an optimal Liouville theorem 
(i.e.~a Liouville theorem valid in the full subcritical range)
has been recently proved in \cite{Q-DMJ}.
Its proof was inspired by \cite{GK} and it was based
on refined energy estimates for suitably rescaled solutions. 
In this paper we adapt the arguments in \cite{Q-DMJ}
to prove an optimal Liouville theorem for the problem
\begin{equation} \label{eq-ubc}
\left.\begin{aligned}
 u_t-\Delta u &=0 &\qquad&\hbox{in }\R^n_+\times\R, \\
 u_\nu &= u^q      &\qquad&\hbox{on }\partial\R^n_+\times\R,
\end{aligned}\quad\right\}
\end{equation}
where  $u=u(x,t)>0$,
$\R^n_+:=\{(x=(x_1,x_2,\dots,x_n)\in\R^n:x_1>0\}$, 
$\nu=(-1,0,0,\dots,0)$ is the outer unit normal on
the boundary $\partial\R^n_+=\{x\in\R^n:x_1=0\}$
and $q>1$.
In addition, we also provide an application of our Liouville theorem.

The nonexistence of positive classical stationary solutions of \eqref{eq-ubc}
is known for $q<q_S$, where
$$ q_S:=\begin{cases}
    +\infty & \hbox{ if } n\leq2, \\
    \frac{n}{n-2}  & \hbox{ if } n>2, 
\end{cases}$$
and the condition $q<q_S$ is optimal for the nonexistence,
see \cite{Hu,Har14} and the references therein.
Our main result is the following Liouville theorem.

\begin{theorem} \label{thm1}
Let $1<q<q_S$. Then problem \hbox{\rm(\ref{eq-ubc})}
does not possess positive classical bounded solutions.
\end{theorem}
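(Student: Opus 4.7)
The strategy is to adapt to the boundary-nonlinearity setting the refined self-similar energy approach developed for the pure-power interior problem in \cite{Q-DMJ}. Argue by contradiction: assume $u>0$ is a bounded classical solution of \eqref{eq-ubc} on $\R^n_+\times\R$ with $\sup u\le M$. For a base point $(a,T)\in\partial\R^n_+\times\R$ introduce the backward self-similar variables
\[
 y=\frac{x-a}{\sqrt{T-t}},\quad s=-\log(T-t),\quad w(y,s):=(T-t)^{1/(2(q-1))}u(x,t).
\]
A direct computation exploiting the scale invariance of \eqref{eq-ubc} shows that $w>0$ solves the autonomous system
\[
 w_s=\Delta w-\tfrac{1}{2}y\cdot\nabla w-\tfrac{w}{2(q-1)}\quad\text{in }\R^n_+,\qquad w_\nu=w^q\quad\text{on }\partial\R^n_+,
\]
and that, with the Gaussian weight $\rho(y):=e^{-|y|^2/4}$, the functional
\[
 E(w):=\int_{\R^n_+}\!\Bigl(\tfrac{1}{2}|\nabla w|^2+\tfrac{w^2}{4(q-1)}\Bigr)\rho\,dy-\tfrac{1}{q+1}\int_{\partial\R^n_+}w^{q+1}\rho\,dy'
\]
is a Lyapunov functional: $\tfrac{d}{ds}E(w(s))=-\int_{\R^n_+}w_s^2\rho\,dy\le 0$.

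The main body of the proof splits into three steps. (i) Convert the crude bound $w\le Me^{-s/(2(q-1))}$ into weighted $L^p$ and $L^\infty_{\mathrm{loc}}$ bounds on $w$ that are uniform in the base time $T$ on any bounded $s$-window. This is the refined-energy argument of \cite{Q-DMJ}, implemented here by testing the equation successively against $w\rho$ and higher-power analogues $w^{2k-1}\rho$, and iterating Moser-style with the Gaussian-weighted half-space trace inequality $\|w\|_{L^r(\partial\R^n_+,\rho\,dy')}\lesssim\|w\|_{H^1(\R^n_+,\rho\,dy)}$, valid for $r\le 2(n-1)/(n-2)$; the strict inequality $q+1<2(n-1)/(n-2)$, equivalent to $q<q_S$, is exactly what lets the boundary nonlinearity be absorbed into the good quadratic terms. (ii) A doubling-type selection of base points $(a_k,T_k)\in\partial\R^n_+\times\R$ forces $w_\infty(0,0)>0$ for the limit, and by time-translation invariance of \eqref{eq-ubc}, parabolic regularity yields a global limit $w_\infty\ge 0$ solving the same self-similar system on $\R^n_+\times\R$. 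Integrating $E'(s)=-\|w_s\|^2_{L^2(\rho)}$ over $\R$ forces $w_\infty$ to be $s$-independent, hence a nonnegative bounded classical solution of the elliptic stationary self-similar system. (iii) A weighted Pohozaev identity, obtained by testing the stationary system against $(y\cdot\nabla w)\rho$ and combined with the Green identity from testing against $w\rho$, produces an integral relation whose coefficients are of the correct sign only in the range $q<q_S$; this forces $w_\infty\equiv 0$ and contradicts $w_\infty(0,0)>0$.

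The principal obstacle is step (i). The bare bound $w(\cdot,s)\le Me^{-s/(2(q-1))}$ diverges as $s\to-\infty$, and the boundary nonlinearity $w^q$ and the linear drift term $w/(2(q-1))$ scale identically under the self-similar transformation, so neither dominates the other a priori and one cannot close an energy estimate by brute force. The refined-energy iteration of \cite{Q-DMJ} was designed precisely to convert $L^\infty$ information into effective uniform-in-$T$ bounds in Gaussian-weighted spaces; transporting it from an interior source to a boundary nonlinearity here forces the use of Gaussian-weighted trace inequalities on the half-space and careful control of the additional boundary contributions in every integration by parts, and it is in this control that the critical Sobolev-trace exponent $q_S=n/(n-2)$ enters in an essential way.
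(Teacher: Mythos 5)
Your proposal recognises the right starting point (backward self-similar variables, Gaussian-weighted energy $E$, monotonicity of $E$), but the core mechanism you propose is not the one the paper uses, and as sketched it cannot close.

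The central difficulty is the one you name: the crude bound on $w$ is $w(\cdot,s)\le C_0 k^{\beta}$ on the relevant window $s\in[s_k-M-1,\infty)$ with $s_k=-\log k\to-\infty$, and so the initial energy estimate is only $E_k^a(s_k-M)\le Ck^{\beta}$, which diverges as $k\to\infty$. Your step (i) proposes to remove this divergence by a Moser-type iteration against test functions $w^{2k-1}\rho$ combined with a Gaussian-weighted trace inequality $\|w\|_{L^r(\partial\R^n_+,\rho)}\lesssim\|w\|_{H^1(\R^n_+,\rho)}$. This is not how \cite{Q-DMJ} works (that paper also contains no Moser iteration), and it cannot succeed here: every norm of $w_k^a$ inherits the scale $k^{\beta}$, so interchanging between Gaussian $L^p$ norms by interpolation/iteration cannot lower the power of $k$. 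What the paper does instead is a genuine \emph{bootstrap on the exponent of $k$}: it shows that if $E_k^a(s_k-m)\le Ck^{\gamma}$ then $E_k^a(s_k-m+1)\le Ck^{\tilde\gamma}$ with $\tilde\gamma<\gamma-\delta(n,q)$, and after finitely many steps one reaches $\gamma_1<\mu:=2\beta-(n-2)/2$. Each bootstrap step rests on Lemma~\ref{lem-decay}, a pointwise smallness estimate for $w$ obtained by a parabolic doubling argument whose blow-up limit is ruled out by Hu's elliptic Liouville theorem, combined with a careful partition of $\partial\R^n_+$ (Lemma~\ref{lemmaG}), a numerical bookkeeping lemma (Lemma~\ref{lem-ineq}), and a good-time selection (Lemma~\ref{lem-step4}). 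None of this appears in your plan, and it is precisely what allows the argument to cover the full range $q<q_S$.

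Your steps (ii)--(iii) also diverge from the paper in a material way. You propose to pass to a limit solving the stationary self-similar elliptic system and to kill it via a weighted Pohozaev identity. The paper does not produce a stationary self-similar limit at all. Instead, once $E_k^0(s_k-1)\le Ck^{\gamma_1}$ with $\gamma_1<\mu$ is in hand, it performs a \emph{second} rescaling $v_k(z,\tau):=\lambda_k^{1/(q-1)}w_k^0(\lambda_k z,\lambda_k^2\tau+s_k)$ with $\lambda_k=k^{-1/2}$. The drift and zeroth-order self-similar terms carry a factor $\lambda_k^2\to0$, and \eqref{Es} together with the bootstrap gives $\int\!\!\int|\partial_\tau v_k|^2\le Ck^{-\mu+\gamma_1}\to0$; the normalisation $v_k(0,0)=u(0,0)>0$ keeps the limit nontrivial. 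The limit therefore solves $\Delta v=0$ in $\R^n_+$, $v_\nu=v^q$ on $\partial\R^n_+$ — the \emph{non}-self-similar problem — and the contradiction comes from Hu's elliptic Liouville theorem, not from a Pohozaev identity for the self-similar system. Your proposal is essentially the Giga--Kohn route, which the introduction explicitly flags as insufficient to reach the optimal exponent. Finally, even granting your step (i), the passage to a limit in step (ii) requires an energy bound uniform in $k$ on the window around $s_k$, which is not available from the crude estimate and is not produced anywhere in your plan.

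Two smaller remarks. First, the monotonicity $u_{x_1}\le0$ (from the moving-planes argument of \cite{Q}) is used repeatedly — e.g.\ in \eqref{monotone-int} and in the doubling lemma — and is absent from your sketch. Second, your claim that the Gaussian-weighted trace inequality holds for $r\le2(n-1)/(n-2)$ globally on $\partial\R^n_+$ should at minimum be justified; the paper avoids the question entirely by working with local Sobolev estimates and the explicit decay of $\rho$ (see the splitting of $\partial\R^n_+$ into $B^\partial_{R_k}$ and its complement in Step~5).
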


The nonexistence result in Theorem~\ref{thm1}
follows from the Fujita-type results in \cite{GL,DFL} if $q\leq(n+1)/n$.
It has also been proved 
for $n=1$, $q>1$ (for solutions with bounded spatial derivatives, see \cite{QS-D}),
and for $n\geq1$ and $q<q_{sg}$ or $q=q_{sg}$ (see \cite{Q} or \cite{Q20}, respectively),
where
$$ q_{sg}:=\begin{cases}
    +\infty & \hbox{ if } n\leq2, \\
    \frac{n-1}{n-2}  & \hbox{ if } n>2. 
\end{cases}$$

Assuming on the contrary that a solution in Theorem~1 exists,
the proof of \cite[Theorem~5]{Q} guarantees that 
we may assume that the solution is nonincreasing in $x_1$.
Using this monotonicity, the proof of Theorem 1
could be obtained by more or less straightforward
modifications of the proof of \cite[Theorem 1]{Q-DMJ}.
Unfortunately, several technical arguments in Steps 4--6 
of the proof of \cite[Theorem 1]{Q-DMJ}
are written in an unnecessarily complicated way.
In order to make those arguments simpler and more transparent,
we have significantly modified the corresponding parts of the proof;
see Lemmas~\ref{lemmaG}--\ref{lem-step4} below.
Analogous modifications can also be done in the proof of \cite[Theorem~1]{Q-DMJ},
see Remark~\ref{remp} below.

Theorem~\ref{thm1} can be used in order to prove 
optimal estimates for various problems
related to \eqref{eq-ubc}.
In particular, it guarantees an optimal blow-up rate estimate
for positive solutions of the problem
\begin{equation} \label{eq-nbc}
\left.\begin{aligned}
 u_t-\Delta u &=0 &\qquad& x\in\Omega,\ t\in(0,T), \\
 u_\nu &= u^q      &\qquad& x\in\partial\Omega,\ t\in(0,T),
\end{aligned}\quad\right\}
\end{equation}
where $\Omega\subset\R^n$ is bounded and smooth.
More precisely, the proof of \cite[Theorem 7]{Q}
shows that the following theorem is true
(see the discussion after \cite[Theorem 7]{Q} for related results
and references on the blow-up rate and see also \cite{Har20}
and the references therein for results on the blow-up profile).

\begin{theorem} \label{thm3b}
Assume that $\Omega\subset\R^n$ is bounded and smooth, 
$1<q<q_S$. 
Assume also that $u$ is a positive classical solution of \hbox{\rm(\ref{eq-nbc})} 
which blows up at $t=T$. 
Then there exists $C=C(u)>0$ such that $u$
satisfies the blow-up rate estimate
$$ u(x,t)(T-t)^{1/2(q-1)}+|\nabla u(x,t)|(T-t)^{q/2(q-1)}\leq C $$
for all $x\in\overline\Omega$ and $t\in(T/2,T)$.
\end{theorem}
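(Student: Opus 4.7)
My plan is to argue by contradiction using the doubling--rescaling method of Pol\'a\v{c}ik--Quittner--Souplet, following the proof of \cite[Theorem~7]{Q}; the essential new ingredient is Theorem~\ref{thm1}, which replaces the more restrictive Liouville theorems of \cite{Q,Q20} and permits the full subcritical range $q<q_S$.

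First, I would work with the scaling-invariant quantity
$$ M(x,t):=u(x,t)^{q-1}+|\nabla u(x,t)|^{(q-1)/q}, $$
which has the dimension of inverse length under the scaling $u_\lambda(x,t):=\lambda^{1/(q-1)}u(\lambda x,\lambda^2 t)$ associated with \eqref{eq-ubc}. The claimed estimate is equivalent to $M(x,t)\sqrt{T-t}\le C$ on $\overline\Omega\times(T/2,T)$. Assuming this fails produces sequences $(y_k,s_k)\in\overline\Omega\times(T/2,T)$ with $M(y_k,s_k)\sqrt{T-s_k}\to\infty$.

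Next, I would apply the parabolic doubling lemma (with parabolic distance $|x-x'|+\sqrt{|t-t'|}$ and excluded set $\overline\Omega\times\{T/2,T\}$) to produce points $(x_k,t_k)\in\overline\Omega\times(T/2,T)$ and a scale $\lambda_k:=1/M(x_k,t_k)\to 0$ such that $M\le 2M(x_k,t_k)$ on a parabolic cylinder of radius $R_k\lambda_k$ around $(x_k,t_k)$, with $R_k\to\infty$. Rescaling via
$$ v_k(y,s):=\lambda_k^{1/(q-1)}u(x_k+\lambda_k y,\,t_k+\lambda_k^2 s) $$
produces a uniformly bounded positive solution of the heat equation on an expanding domain, satisfying $(v_k)_\nu=v_k^q$ on the rescaled piece of $\partial\Omega$ and normalized by $v_k(0,0)^{q-1}+|\nabla v_k(0,0)|^{(q-1)/q}=1$. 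Interior and boundary parabolic Schauder estimates, applicable because $\Omega$ is smooth, yield $C^{2+\alpha,1+\alpha/2}_{\mathrm{loc}}$ compactness, and a subsequence of $v_k$ converges to a limit $v\ge 0$.

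The main obstacle is the dichotomy determined by $d_k:=\mathrm{dist}(x_k,\partial\Omega)/\lambda_k$. When $d_k$ is bounded, $v$ is, after translation and rotation, a classical bounded solution of \eqref{eq-ubc}; the normalization together with the strong maximum principle guarantees $v>0$, and Theorem~\ref{thm1} then delivers the contradiction. When $d_k\to\infty$, the limit would be an interior blow-up profile, which I would rule out as in \cite[Theorem~7]{Q} by invoking the fact that the blow-up set of \eqref{eq-nbc} is contained in $\partial\Omega$; the doubling can then be refined so that $x_k\in\partial\Omega$, giving $d_k=0$ and reducing everything to the boundary case.
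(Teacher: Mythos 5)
Your proposal reproduces the overall strategy that the paper invokes: the paper does not give a self-contained proof of Theorem~\ref{thm3b} but simply remarks that ``the proof of \cite[Theorem~7]{Q} shows that the following theorem is true,'' i.e.\ the doubling--rescaling argument of Pol\'a\v cik--Quittner--Souplet, with Theorem~\ref{thm1} supplying the parabolic half-space Liouville theorem in the full range $1<q<q_S$. Your choice of the scale-invariant quantity $M$, the use of the parabolic doubling lemma with the excluded time slice, the passage to the rescaled solutions $v_k$ with Schauder compactness, and the dichotomy between $d_k/\lambda_k$ bounded and $d_k/\lambda_k\to\infty$ are all the correct ingredients, and the boundary case (bounded $d_k/\lambda_k$) is handled just as it should be: the strong maximum principle and Hopf's lemma give positivity of the limit, and Theorem~\ref{thm1} yields the contradiction.

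The one place where your argument is genuinely imprecise is the interior case $d_k/\lambda_k\to\infty$. You write that because the blow-up set of \eqref{eq-nbc} is contained in $\partial\Omega$, ``the doubling can then be refined so that $x_k\in\partial\Omega$, giving $d_k=0$.'' That is not what the doubling lemma gives; it produces points in the whole of $\overline\Omega$, and knowing only that blow-up occurs on the boundary forces $\operatorname{dist}(x_k,\partial\Omega)\to0$ but does \emph{not} force $d_k/\lambda_k$ to stay bounded, since $\lambda_k$ could tend to zero even faster. Moreover the interior case is a real issue here: the rescaled limit in that regime solves the \emph{linear} heat equation in $\R^n\times(-\infty,0]$ with no source, and the parabolic Liouville theorem only tells you the limit is a positive constant --- which is perfectly compatible with your normalization $v(0,0)^{q-1}+|\nabla v(0,0)|^{(q-1)/q}=1$. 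Ruling this out requires a separate quantitative interior estimate for the linear heat equation (a local smoothing estimate bounding $\sup u+\sup|\nabla u|$ on an interior parabolic ball by a lower-order norm of $u$ on a larger ball, as in the relevant lemma of \cite{PQS}/\cite{Q}), combined with control of that lower-order norm; this is precisely where the proof of \cite[Theorem~7]{Q} does real work that your sketch glosses over. So the plan is correct, but the interior case needs to be spelled out along the lines of \cite{Q} rather than dismissed by ``refining'' the doubling onto the boundary.
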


\section{Proof of Theorem~\ref{thm1}} 
\label{sec-proof}

The proof is a combination of arguments used in the proofs of \cite[Theorem~5]{Q} 
and \cite[Theorem~1]{Q-DMJ}.

Assume on the contrary that there exists
a positive bounded solution $u$ of (\ref{eq-ubc}).
As in the proof of \cite[Theorem~5]{Q}
we may assume 
$$ u(x,t)+|\nabla u(x,t)|\leq 1 \qquad\hbox{ for all }x\in\overline{\R^n_+},\ t\in\R, $$
and we also have
\begin{equation} \label{monotone}
 u_{x_1}(x,t)\leq0 \qquad\hbox{ for all }x\in\R^n_+,\ t\in\R.
\end{equation}

Due to the results in \cite{Q,Q20} we may also assume $q>q_{sg}$.
We set $\beta:=\frac1{2(q-1)}$
and by $C,C_0,C_1,\dots,c,c_0,c_1,\dots$ we will denote
positive constants which depend only on $n$ and $q$;
the constants $C,c$ may vary from step to step.
Finally, $M=M(n,q)$ will denote a positive integer
(the number of bootstrap steps). 
The proof will be divided into several steps.

{\bf Step 1: Initial estimates.}
For $y\in\R^n_+$, $s\in\R$, $a\in\partial\R^n_+$ and
$k=1,2,\dots$ set 
$$w(y,s)=w_k^a(y,s):=(k-t)^\beta u(y\sqrt{k-t}+a,t),\qquad\hbox{where }\  s=-\log(k-t),\ \ t<k.$$
Set also $s_k:=-\log k$ and notice that
$w=w_k^a$ solve the problem
\begin{equation} \label{eq-wbc}
\left.\begin{aligned}
 w_s &=\Delta w-\frac12 y\cdot\nabla w-\beta w 
   =\frac1\rho\nabla\cdot(\rho\nabla w)-\beta w &\quad& \hbox{in }
\R^n_+\times\R, \\
 w_\nu &= w^q &\quad& \hbox{on }\partial\R^n_+\times\R,
\end{aligned}\ \right\}
\end{equation}
where $\rho(y):=e^{-|y|^2/4}$.
In addition, $w_k^a(0,s_k)=k^\beta u(a,0)$ 
and
\begin{equation} \label{bound-w2}
 \|w_k^a(\cdot,s)\|_\infty\leq C_0k^\beta\|u(\cdot,t)\|_\infty\leq
C_0k^\beta \quad \hbox{for }\
s\in[s_k-M-1,\infty),
\end{equation}
where $t=k-e^{-s}$ and $C_0:=e^{(M+1)\beta}$.
Set
$$ 
 E(s)=E_k^a(s):=\frac12\int_{\R^n_+}\bigl(|\nabla w_k^a|^2+\beta (w_k^a)^2\bigr)(y,s)\rho(y)\,dy 
  -\frac1{q+1}\int_{\partial\R^n_+}(w_k^a)^{q+1}(y,s)\rho(y)\,dS_y.
$$
Multiplying equation (\ref{eq-wbc}) by $w\rho$ 
and integrating over $y\in\R^n_+$  we obtain
\begin{equation} \label{Ews}
E(s)=-\frac12\int_{\R^n_+}(ww_s)(y,s)\rho(y)\,dy
    +\frac12\,\frac{q-1}{q+1}\int_{\partial\R^n_+}w^{q+1}(y,s)\rho(y)\,dS_y.
\end{equation}
The function $s\mapsto E(s)$ is nonincreasing and nonnegative (see \cite{CF}).
Integrating \eqref{Ews} over the time interval $(\sigma_1,\sigma_2)$
we have
\begin{equation} \label{GK1}
\left.\begin{aligned}
\frac12&\Bigl(\int_{\R^n_+}w^2(y,\sigma_2)\rho(y)\,dy
             -\int_{\R^n_+}w^2(y,\sigma_1)\rho(y)\,dy\Bigr) \\
 &\qquad\qquad= -2\int_{\sigma_1}^{\sigma_2}E(s)\,ds
  +\frac{q-1}{q+1}\int_{\sigma_1}^{\sigma_2}\int_{\partial\R^n_+}
    w^{q+1}(y,s)\rho(y)\,dS_y\,ds.
\end{aligned}\ \right\}
\end{equation}
As in the proof of \cite[Theorem~5]{Q} we also obtain
\begin{equation} \label{GK2}
 \int_{\sigma_1}^{\sigma_2}\int_{\R^n_+} 
 \Big|\frac{\partial w}{\partial s}(y,s)\Big|^2\rho(y)\,dy\,ds
 = E(\sigma_1)-E(\sigma_2)\leq E(\sigma_1),
\end{equation}
\begin{equation} \label{monotone-int}
 \int_{\R^n_+}w^r(y,s)\rho(y)\,dy \leq C\int_{\partial\R^n_+}w^r(y,s)\rho(y)\,dS_y,
 \quad r\geq1, 
\end{equation}
\begin{equation} \label{GK4}
 \int_{\R^n_+}w(y,s)\rho(y)\,dy \leq C, 
\end{equation}
\begin{equation} \label{GK5}
 \int_{\sigma_1}^{\sigma_2}\int_{\partial\R^n_+}w^q(y,s)\rho(y)\,dS_y\,ds
 \leq C(1+\sigma_2-\sigma_1).
\end{equation} 
Given $1\leq m\leq M$,
the monotonicity  of $E$,
(\ref{GK1}), (\ref{bound-w2}), (\ref{GK4}) and (\ref{GK5})
guarantee
$$ \begin{aligned}
2&E_k^a(s_k-m) 
\leq 2\int_{s_k-m-1}^{s_k-m} E_k^a(s)\,ds \\
 &\leq \frac12\int_{\R^n_+}(w_k^a)^2(y,s_k-m-1)\rho(y)\,dy
 +\frac{q-1}{q+1}\int_{s_k-m-1}^{s_k-m}\int_{\partial\R^n_+}
   (w_k^a)^{q+1}(y,s)\rho(y)\,dS_y\,ds  \\
 &\leq Ck^\beta\Bigl( \int_{\R^n_+}w_k^a(y,s_k-m-1)\rho(y)\,dy
 +\int_{s_k-m-1}^{s_k-m}\int_{\partial\R^n_+}
   (w_k^a)^{q}(y,s)\rho(y)\,dS_y\,ds\Bigr)  \\
&\leq Ck^\beta. 
\end{aligned} $$ 
Consequently, 
\begin{equation} \label{EM}
E_k^a(s_k-M)\leq C k^\beta.
\end{equation}
Notice also that (\ref{GK2}) guarantees
\begin{equation} \label{Es}
\int_{s_k-m}^{s_k-m+1}\int_{\R^n_+} 
\Big|\frac{\partial w_k^a}{\partial s}(y,s)\Big|^2\rho(y)\,dy\,ds
\leq E_k^a(s_k-m),\quad m=1,2,\dots M.
\end{equation}

{\bf Step 2: The plan of the proof.}
We will show that there exist an integer $M=M(n,q)$ and positive numbers $\gamma_m$,
$m=1,2,\dots M$, such that
$$\gamma_1<\gamma_2<\dots<\gamma_M=\beta, \qquad \gamma_1<\mu:=2\beta-\frac{n-2}2,$$ 
and
\begin{equation} \label{E}
E_k^a(s_k-m)\leq Ck^{\gamma_m}, \qquad  a\in\partial\R^n_+,\ k\hbox{ large}, 
\end{equation}
where $m=M,M-1,\dots,1$, and ``$k$ large'' means $k\geq k_0$ with $k_0=k_0(n,q,u)$. 
Then, taking $\lambda_k:=k^{-1/2}$ and setting
$$ v_k(z,\tau):=\lambda_k^{1/(q-1)}w_k^0(\lambda_k z,\lambda_k^2\tau+s_k),
 \qquad z\in\R^n_+,\ -k\leq\tau\leq0,  $$ 
we obtain
$0<v_k\leq C$, $v_k(0,0)=u(0,0)$,
$$\begin{aligned} \frac{\partial v_k}{\partial\tau}-\Delta v_k
  &=-\lambda_k^2\Bigl(\frac12 z\cdot\nabla v_k+\beta v_k\Bigr) &\quad&\hbox{ in }\R^n_+\times(-k,0), \\ 
  (v_k)_\nu &= v_k^q &\quad&\hbox{ on }\partial\R^n_+\times(-k,0).
\end{aligned}$$
In addition,
using \eqref{Es} and \eqref{E} with $m=1$  we also have
\begin{equation} \label{estvtau}
\begin{aligned}
\int_{-k}^0\int_{|z|<\sqrt{k},\,z_1>0}
 \Big|\frac{\partial v_k}{\partial\tau}(z,\tau)\Big|^2\,dz\,d\tau
  &=\lambda_k^{2\mu}
 \int_{s_k-1}^{s_k}\int_{|y|<1,\,y_1>0} 
\Big|\frac{\partial w^0_k}{\partial s}(y,s)\Big|^2\,dy\,ds \\
&\leq C k^{-\mu+\gamma_1}\to 0 \quad\hbox{as }\ k\to\infty.
\end{aligned}
\end{equation}
Now a priori estimates of $v_k$ 
(see estimates in \cite[Theorem 7.2 and the subsequent Remark]{LSU} or \cite[Theorem 13.16]{Lieb}
applied to $v_k$ and their first order derivatives, and cf.~also
\cite[(3.9)]{HY}, for example) 
show that (up to a subsequence) the sequence $\{v_k\}$
converges to a positive solution $v=v(z)$
of the problem $\Delta v=0$ in $\R^n_+$, $v_\nu=v^q$ on $\partial\R^n_+$
which contradicts the elliptic Liouville theorem in \cite{Hu}. 
This contradiction will conclude the proof.

Notice that \eqref{E} is true if $m=M$ due to \eqref{EM}.
In the rest of the proof we consider $M>1$, fix $m\in\{M,M-1,\dots,2\}$,
assume that \eqref{E} is true with this fixed $m$, 
and we will prove that \eqref{E} remains true with $m$ replaced by $m-1$.
More precisely, we assume
\begin{equation} \label{Em}
E_k^a(s_k-m)\leq Ck^{\gamma}, \qquad  a\in\partial\R^n_+,\ k\ \hbox{large},
\end{equation}
(where $\gamma:=\gamma_m\in[\mu,\beta]$)
and we will show that
\begin{equation} \label{Em1}
  E_k^a(s_k-m+1)\leq Ck^{\tilde\gamma}, \qquad  a\in\partial\R^n_+,\ k\ \hbox{large},
\end{equation}
where $\tilde\gamma<\gamma$ (and then we set $\gamma_{m-1}:=\tilde\gamma$).
Our proof shows that there exists an open neighbourhood $U=U(n,q,\gamma)$ of $\gamma$
such that
\eqref{Em1} remains true also if \eqref{Em}
is satisfied with $\gamma$ replaced by any $\gamma'\in U$.
The compactness of $[\mu,\beta]$ guarantees that 
the difference $\gamma-\tilde\gamma$ can be bounded below by a positive constant
$\delta=\delta(n,q)$ for all $\gamma\in[\mu,\beta]$, hence
there exists $M=M(n,q)$ such that $\gamma_1<\mu\leq\gamma_2$.  

{\bf Step 3: Notation and auxiliary results.}
In the rest of the proof we will also use the following notation
and facts:
If $Z$ is a finite set or a measurable subset of $\R^d$, then by $\#Z$ or $|Z|$ we denote 
the cardinality or the $d$-dimensional measure of $Z$, respectively.
Set 
$$\begin{aligned}
 C(M)&:=8ne^{M+1}, &\ \ B^\partial_r(a)&:=\{x\in\partial\R^n_+:|x-a|\leq r\}, &\ \ B^\partial_r&:=B^\partial_r(0), \\
 R_k&:=\sqrt{8n\log k}, &\ \ B^+_r(a)&:=\{x\in\R^n_+:|x-a|\leq r\}, &\ \ B^+_r&:=B^+_r(0). 
\end{aligned} $$
Given $a\in\partial\R^n_+$, there exists an integer $X=X(n,k)$ and
there exist $a^1,a^2,\dots a^X\in\partial\R^n_+$ (depending on $a,n,k$) such that
$a^1=a$, $X\leq C(\log k)^{(n-1)/2}$ and
\begin{equation} \label{B}
 D^k(a):=B^\partial_{\sqrt{C(M)k\log(k)}}(a)\subset\bigcup_{i=1}^X B^\partial_{\sqrt{k}/2}(a^i).
\end{equation}
Notice that if $y\in B^\partial_{R_k}$ and $s\in[s_k-M-1,s_k]$,
then $a+ye^{-s/2}\in D^k(a)$, hence
\eqref{B} guarantees the existence of $i\in\{1,2,\dots,X\}$
such that 
\begin{equation} \label{yyi}
w^a_k(y,s)=w^{a^i}_k(y^i,s),\qquad\hbox{where}\quad
 y^i:=y+(a-a^i)e^{s/2}\in B^\partial_{1/2}.
\end{equation}

The contradiction argument in Step~2 based on 
the nonexistence of positive stationary solutions of \eqref{eq-ubc},
combined with a doubling argument
can be used to obtain the following
useful pointwise estimates of the solution $u$.

\begin{lemma} \label{lem-decay}
Let $M,s_k,w^a_k$ be as above,
$\zeta\in\R$, $\xi,C^*>0$, $d_k,r_k\in(0,1]$, $k=1,2,\dots$.
Set
$$ \begin{aligned}
{\mathcal T}_k &={\mathcal T}_k(d_k,r_k,\zeta,C^*) \\
 &:=\Bigl\{(a,\sigma,b)\in\partial\R^n_+\times(s_k-M,s_k]\times\partial\R^n_+:
  \int_{\sigma-d_k}^{\sigma}\int_{B^+_{r_k}(b)} (w^a_k)_s^2 dy\,ds \leq C^*k^\zeta
\Bigr\}. \end{aligned}$$ 
Assume
\begin{equation} \label{xizeta}
 \xi\frac\mu\beta>\zeta\quad{and}\quad
 \frac1{\log(k)}\min(d_kk^{\xi/\beta},r_kk^{\xi/2\beta})\to\infty\ 
 \hbox{ as }\ k\to\infty.
\end{equation}
Then there exists $k_1$ such that
$$ w^a_k(y,\sigma)\leq k^\xi \quad\hbox{whenever}\quad
 y\in B^\partial_{r_k/2}(b), \ \  k\geq k_1 \ 
\hbox{ and } \ (a,\sigma,b)\in{\mathcal T}_k.$$ 
\end{lemma}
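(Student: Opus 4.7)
The strategy is a contradiction argument combining the parabolic doubling lemma of Pol\'a\v{c}ik-Quittner-Souganidis with a rescaling that produces a bounded positive solution of the stationary problem $\Delta v=0$ in $\R^n_+$, $v_\nu=v^q$ on $\partial\R^n_+$, contradicting the elliptic Liouville theorem of \cite{Hu}.

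Suppose the conclusion fails: along a subsequence $k\to\infty$ we have $(a,\sigma,b)\in\mathcal{T}_k$ and $y^*\in B^\partial_{r_k/2}(b)$ with $M:=w^a_k(y^*,\sigma)>k^\xi$. Apply the doubling lemma to $F:=(w^a_k)^{q-1}$ on the parabolic cylinder $\overline{B^+_{r_k}(b)}\times[\sigma-d_k,\sigma+d_k]$, where the extension to $s>\sigma$ is harmless thanks to the global bound \eqref{bound-w2}. Hypothesis \eqref{xizeta} yields
$$F(y^*,\sigma)\cdot\min(r_k/2,d_k^{1/2})\ \geq\ \min\bigl(r_k k^{\xi/(2\beta)}/2,\,\sqrt{d_k k^{\xi/\beta}}\bigr)\ \geq\ c\sqrt{\log k},$$
so doubling with parameter $L_k\sim\sqrt{\log k}\to\infty$ produces $(\tilde y,\tilde\sigma)$ with $\tilde M:=w^a_k(\tilde y,\tilde\sigma)\geq M$ and $w^a_k\leq 2^{1/(q-1)}\tilde M$ on the parabolic ball of radius $L_k\tilde M^{-(q-1)}$ around $(\tilde y,\tilde\sigma)$.

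Set $\tilde\epsilon:=\tilde M^{-(q-1)}$ and rescale $V_k(z,\tau):=\tilde M^{-1}\,w^a_k(\tilde y+\tilde\epsilon z,\tilde\sigma+\tilde\epsilon^2\tau)$, so that $V_k(0,0)=1$, $V_k\leq 2^{1/(q-1)}$ on $\{|z|\leq L_k,\,|\tau|\leq L_k^2\}$ (a set exhausting $\R^n_+\times(-\infty,0]$), and
$$V_{k,\tau}=\Delta V_k-\tfrac{\tilde\epsilon}{2}(\tilde y+\tilde\epsilon z)\cdot\nabla V_k-\beta\tilde\epsilon^2 V_k,\qquad (V_k)_\nu=V_k^q\ \text{on}\ \partial\R^n_+.$$
A change of variables in the defining integral of $\mathcal{T}_k$ (the Jacobian contributes $\tilde\epsilon^{-n-2}$, differentiation contributes $\tilde M^{-2}\tilde\epsilon^4$), together with the identity $\mu/\beta=2-(n-2)(q-1)$ and $\tilde M\geq k^\xi$, gives
$$\iint|V_{k,\tau}|^2\,dz\,d\tau=\tilde M^{-\mu/\beta}\!\iint(w^a_k)_s^2\,dy\,ds\leq Ck^{\zeta-\xi\mu/\beta}\to 0,$$
by $\xi\mu/\beta>\zeta$.

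To pass to the limit, use the translation property \eqref{yyi} to reduce to the case where $|\tilde y|$ is bounded, so that $\tilde\epsilon\tilde y\to 0$ and both the drift and linear coefficients in the equation for $V_k$ vanish on compacta. Parabolic $L^p$ and Schauder estimates applied both to $V_k$ and to its first spatial derivatives (as in Step~2; cf.~\cite{LSU,Lieb}) deliver uniform $C_{\mathrm{loc}}^{2+\alpha,1+\alpha/2}$ bounds, so along a subsequence $V_k\to V$ in $C_{\mathrm{loc}}^{2,1}$. The $L^2_{\mathrm{loc}}$ smallness of $V_{k,\tau}$ forces $V_\tau\equiv 0$, whence $V=V(z)$ is a positive bounded classical solution of $\Delta V=0$ in $\R^n_+$ with $V_\nu=V^q$ on $\partial\R^n_+$ and $V(0)=1$, contradicting \cite{Hu}. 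The main technical obstacle lies in this reduction step: the shift required to bring $\tilde y$ into a bounded set slightly perturbs the $L^2$ integral in $\mathcal{T}_k$ by a term involving $\nabla w^a_k$, and one must verify that this perturbation is controllable via $|\nabla u|\leq 1$ and the length of the time interval.
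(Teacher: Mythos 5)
Your proof follows the same overall strategy as the paper's: argue by contradiction, run a parabolic doubling argument on the bad sequence, rescale around the doubled point, and pass to the limit to produce a bounded positive stationary solution contradicting Hu's elliptic Liouville theorem. The difference is purely in the implementation of the doubling. You invoke the abstract parabolic doubling lemma (of Pol\'a\v{c}ik--Quittner--Souplet, not ``Souganidis'') on a half-space cylinder, whereas the paper builds the doubling by hand on a finite nested chain of \emph{boundary} cylinders $Z_0\subset\cdots\subset Z_K$ with $K<C\log k$ chosen so that $2^Kk^\xi>C_0k^\beta$, finds $j^*$ with $2\sup_{Z_{j^*}}\tilde w_k\geq\sup_{Z_{j^*+1}}\tilde w_k$, and then uses the monotonicity \eqref{monotone} to propagate the boundary supremum into the interior cylinder $\hat Q_k\subset B^+_{r_k}(b_k)\times[\sigma_k-d_k,\sigma_k]$. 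Your interior doubling avoids \eqref{monotone}, but then you must set up the doubling lemma so that the distinguished boundary $\Gamma$ \emph{excludes} $\partial\R^n_+$ (otherwise $y^*\in B^\partial_{r_k/2}(b)$ has zero parabolic distance to $\Gamma$); and your extension of the time interval to $s>\sigma$ is both unnecessary (parabolic doubling measures distance to the past and lateral boundary only) and harmful, since the doubled parabolic ball could then leave the range $[\sigma-d_k,\sigma]$ on which $\mathcal T_k$ provides the $L^2$ bound on $(w^a_k)_s$.

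Your scaling computation $\lambda_k^{2\mu}=\tilde M^{-\mu/\beta}$ matches the paper. You are also right that the rescaled equation carries an extra drift $-\tfrac12\tilde\epsilon\,\tilde y\cdot\nabla V_k$, which the paper's display \eqref{eqvk} omits. However, the remedy you propose---translating the base point $a$ via \eqref{yyi} so that $\tilde y$ becomes bounded---is a detour that, as you yourself flag, perturbs the defining integral of $\mathcal T_k$ by a $\nabla w$ term with an $e^{s/2}$ weight, and your proof does not control this perturbation; so as written your argument has a genuine unfilled gap at exactly the place you identify. The cleaner observation is that this extra drift is a constant vector tangent to $\partial\R^n_+$, and in the paper's application of the lemma (inside Lemma~\ref{lemmaG}, where $|b|\leq C$ and hence $|\hat y_k|\leq C$) one simply has $\lambda_k\hat y_k\to 0$, so the term disappears in the limit with no extra work. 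Your concern is therefore legitimate for the lemma stated at full generality, but it is not an obstruction to the argument as actually used, and the \eqref{yyi} reduction is not the right tool to address it.
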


\begin{proof}
Assume on the contrary that there exist $k_1,k_2\dots$ with the following properties:
$k_j\to\infty$ as $j\to\infty$, and for each $k\in\{k_1,k_2,\dots\}$ there exist
$(a_k,\sigma_k,b_k)\in{\mathcal T}_k$ and $y_k\in B^\partial_{r_k/2}(b_k)$ such that
$\tilde w_k(y_k,\sigma_k)>k^\xi$, where $\tilde w_k:=w^{a_k}_{k}$.

Given $k\in\{k_1,k_2\,\dots\}$, we can choose an integer $K$ such that
\begin{equation} \label{Klemma}
  2^Kk^{\xi}>C_0k^\beta, \qquad K<C\log k.
\end{equation}
Set
$$ Z_j:=B^\partial_{r_k(1/2+j/(2K))}(b_k)\times[\sigma_k-d_k(1/2+j/(2K)),\sigma_k],
 \quad j=0,1,\dots,K.$$
Then
$$ B^\partial_{r_k/2}(b_k)\times[\sigma_k-d_k/2,\sigma_k]=Z_0\subset Z_1\subset\dots\subset Z_K
  =B^\partial_{r_k}(b_k)\times[\sigma_k-d_k,\sigma_k].$$
Since $\sup_{Z_0}\tilde w_k\geq \tilde w_k(y_k,\sigma_k)>k^{\xi}$,
estimates \eqref{Klemma} and \eqref{bound-w2} imply the existence of $j^*\in\{0,1,\dots K-1\}$ such that
$$ 2\sup_{Z_{j^*}}\tilde w_k \geq \sup_{Z_{j^*+1}}\tilde w_k $$
(otherwise $C_0k^\beta\geq\sup_{Z_K}\tilde w_k>2^K\sup_{Z_0}\tilde w_k>2^K k^\xi$, a contradiction).
Fix $(\hat y_k,\hat s_k)\in Z_{j^*}$ such that
$$ W_k:=\tilde w_k(\hat y_k,\hat s_k)=\sup_{Z_{j^*}}\tilde w_k.$$
Then $W_k\geq  k^{\xi}$,
$B^\partial_{r_k/(2K)}(\hat y_k)\times\Bigl[\hat s_k-\frac{d_k}{2K},\hat s_k\Bigr]\subset Z_{j^*+1}$,
hence \eqref{monotone} implies
$$\tilde w_k\leq 2W_k \ \hbox{ on } \
 \hat Q_k:=B^+_{r_k/(2K)}(\hat y_k)\times\Bigl[\hat s_k-\frac{d_k}{2K},\hat s_k\Bigr]. $$

Set $\lambda_k:=W_k^{-1/(2\beta)}$ (hence $\lambda_k\leq k^{-\xi/(2\beta)}\to 0$ as $k\to\infty$)
and
$$ v_k(z,\tau):=\lambda_k^{2\beta}\tilde w_k(\lambda_k z+\hat y_k,\lambda_k^2\tau+\hat s_k).$$
Then $v_k(0,0)=1$, $v_k\leq2$ on $Q_k:=B^+_{r_k/(2K\lambda_k)}\times[-d_k/(2K\lambda_k^2),0]$, and
\begin{equation} \label{eqvk} 
 \begin{aligned}
  \frac{\partial v_k}{\partial\tau}-\Delta v_k
  &=-\lambda_k^2\Bigl(\frac12 z\cdot\nabla v_k+\beta v_k\Bigr) &\quad&\hbox{in }\ Q_k, \\
  (v_k)_\nu &= v_k^q &\quad&\hbox{on }\ Q^\partial_k, 
\end{aligned}
\end{equation}
where $Q^\partial_k:=B^\partial_{r_k/(2K\lambda_k)}\times[-d_k/(2K\lambda_k^2),0]$.
In addition, as $k\to\infty$,
$$
  \frac{r_k}{2K\lambda_k} \geq\frac{r_k k^{\xi/(2\beta)}}{C\log(k)}\to\infty, \quad
  \frac{d_k}{2K\lambda_k^2} \geq\frac{d_k k^{\xi/\beta}}{C\log(k)}\to\infty.
$$
Since $(a_k,\sigma_k,b_k)\in{\mathcal T}_k$ and 
$\hat Q_k\subset B^+_{r_k}(b_k)\times[\sigma_k-d_k,\sigma_k]$, we obtain
$$
\int_{Q_k}\Big|\frac{\partial v_k}{\partial\tau}(z,\tau)\Big|^2\,dz\,d\tau
  =\lambda_k^{2\mu}\int_{\hat Q_k}
\Big|\frac{\partial \tilde w_k}{\partial s}(y,s)\Big|^2\,dy\,ds \leq C^*k^\delta,
\quad\hbox{where }\ \delta:=-\xi\frac\mu\beta+\zeta <0.
$$
Hence, as above, a suitable subsequence of $\{v_k\}$
converges to a positive solution $v=v(z)$
of the problem $\Delta v=0$ in $\R^n_+$, $v_\nu=v^q$ on $\partial\R^n_+$, 
which contradicts the elliptic Liouville theorem in \cite{Hu}.
 \end{proof}

\begin{remark} \rm
By a simple modification of the proof of Lemma~\ref{lem-decay}
one can show that the estimate 
$w^a_k(y,\sigma)\leq k^\xi$  can be improved to
$$ w^a_k(y,\sigma)+|\nabla w^a_k(y,\sigma)|^{1/q}+|(w^a_k)_s(y,\sigma)|^{1/(2q-1)}
 \leq k^\xi.$$
In fact, set $\tilde w_k:=w^{a_k}_k+|\nabla w^{a_k}_k|^{1/q}+|(w^{a_k}_k)_s|^{1/(2q-1)}$ 
and assume on the contrary that
$\tilde w_k(y_k,\sigma_k)>k^\xi$, where $y_k,\sigma_k,k$ are as in the proof of Lemma~\ref{lem-decay}.
Repeat the doubling estimates (with a modified constant $C_0$)
and define $W_k$ and $\lambda_k$ as in that proof,
but replace $\tilde w_k$ with $w^{a_k}_k$ in the definition of $v_k$.
Then $v_k$ solves \eqref{eqvk},
$$ \begin{aligned}
\bigl(v_k+|\nabla v_k|^{1/q}+|(v_k)_\tau|^{1/(2q-1)}\bigr)(0,0)&=1, \\ 
v_k+|\nabla v_k|^{1/q}+|(v_k)_\tau|^{1/(2q-1)}&\leq2\ \hbox{ in $Q_k$},
\end{aligned}$$ 
and passing to the limit we arrive at a contradiction.
\qed
\end{remark}

Recall that $\gamma\in[\mu,\beta]$ (see \eqref{Em}).

\begin{lemma} \label{lemmaG}
Let ${\mathcal T}_k={\mathcal T}_k(d_k,r_k,\zeta,C^*)$ be as in Lemma~\ref{lem-decay},
$\omega\in\R$, $\eps,C_1>0$,
$$ 0\leq\alpha<\frac\xi\beta,\qquad \xi\frac\mu\beta>\gamma-\alpha+\eps-\omega, $$ 
and assume 
\begin{equation} \label{assG}
 \hbox{$(a,\sigma,0)\in {\mathcal T}_k(\frac12 k^{-\alpha},1,\gamma-\alpha+\eps,C_1)$\ \ for $k$ large}.
\end{equation}
Set 
$$G:=\{y\in B^\partial_{1/2}: w^a_k(y,\sigma)\leq k^\xi\}.$$
Then 
\begin{equation} \label{Gc}
|B^\partial_{1/2}\setminus G|\leq Ck^{\omega-(n-1)\alpha/2}\ \hbox{ for $k$ large}.
\end{equation}
\end{lemma}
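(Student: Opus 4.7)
The plan is to combine the pointwise decay bound of Lemma~\ref{lem-decay} with a covering-plus-pigeonhole argument, in which the $L^2$-budget on $(w^a_k)_s$ provided by \eqref{assG} plays the role of the pigeonhole mass. The guiding idea: any $y\in B^\partial_{1/2}$ that violates $w^a_k(y,\sigma)\leq k^\xi$ must, by the contrapositive of Lemma~\ref{lem-decay}, sit in a small half-ball carrying a significant slice of $\int(w^a_k)_s^2$; since \eqref{assG} bounds this integral globally on $B^+_1$ by $C_1 k^{\gamma-\alpha+\eps}$, such points cannot cover too much planar measure.

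Concretely, I would fix parameters $d_k:=\tfrac12 k^{-\alpha}$, $r_k:=k^{-\alpha/2}$, $\zeta:=\gamma-\alpha+\eps-\omega$, and any $C^*\geq 1$. The hypothesis $0\leq\alpha<\xi/\beta$ yields $d_k k^{\xi/\beta}/\log k\to\infty$ and $r_k k^{\xi/(2\beta)}/\log k\to\infty$, while $\xi\mu/\beta>\gamma-\alpha+\eps-\omega$ reads $\xi\mu/\beta>\zeta$; these are precisely the hypotheses \eqref{xizeta} of Lemma~\ref{lem-decay}. Next I would cover $B^\partial_{1/2}$ by balls $B^\partial_{r_k/2}(b_j)$, $j=1,\dots,N_k$, with centers $b_j\in B^\partial_{1/2}$ forming a maximal $r_k/2$-separated set, so that $N_k\leq Cr_k^{-(n-1)}$ and the enlarged half-balls $B^+_{r_k}(b_j)\subset B^+_1$ (for $k$ large, since $r_k\leq 1/2$) have bounded overlap, with some dimensional constant $c_n$.

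I would then call $j$ \emph{bad} if $(a,\sigma,b_j)\notin{\mathcal T}_k(d_k,r_k,\zeta,C^*)$, and \emph{good} otherwise. By Lemma~\ref{lem-decay}, a good $j$ forces $w^a_k(y,\sigma)\leq k^\xi$ on all of $B^\partial_{r_k/2}(b_j)$, so every $y\in B^\partial_{1/2}\setminus G$ lies in at least one bad ball. Summing the defining strict inequalities for bad indices and applying bounded overlap together with \eqref{assG},
\begin{equation*}
C^*k^\zeta\cdot\#\{j:\text{bad}\}\leq\sum_{j\text{ bad}}\int_{\sigma-d_k}^\sigma\!\!\int_{B^+_{r_k}(b_j)}(w^a_k)_s^2\,dy\,ds\leq c_n\int_{\sigma-d_k}^\sigma\!\!\int_{B^+_1}(w^a_k)_s^2\,dy\,ds\leq c_nC_1k^{\gamma-\alpha+\eps},
\end{equation*}
whence $\#\{j:\text{bad}\}\leq Ck^{\gamma-\alpha+\eps-\zeta}=Ck^\omega$; since $B^\partial_{1/2}\setminus G$ is covered by these bad $(n-1)$-balls of measure $\leq Cr_k^{n-1}$, one obtains $|B^\partial_{1/2}\setminus G|\leq Ck^\omega r_k^{n-1}=Ck^{\omega-\alpha(n-1)/2}$, which is exactly \eqref{Gc}.

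The only delicate point is the joint calibration of the three parameters: $r_k$ must be small enough that $r_k^{n-1}=k^{-\alpha(n-1)/2}$ reproduces the target factor in \eqref{Gc}, yet large enough that $r_k k^{\xi/(2\beta)}/\log k\to\infty$ holds; $d_k$ must match the time-window in \eqref{assG}; and $\zeta$ must satisfy both $\zeta=\gamma-\alpha+\eps-\omega$ (so the counting matches the claim) and $\zeta<\xi\mu/\beta$ (so Lemma~\ref{lem-decay} applies). All three constraints close up precisely because of the hypotheses $\alpha<\xi/\beta$ and the strict inequality $\xi\mu/\beta>\gamma-\alpha+\eps-\omega$; beyond this bookkeeping, I do not expect a serious obstacle.
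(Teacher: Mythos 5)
Your proof is correct and follows essentially the same route as the paper's: cover $B^\partial_{1/2}$ by $(n-1)$-balls of radius $\tfrac12 k^{-\alpha/2}$ with bounded-overlap enlargements, classify indices by membership in $\mathcal{T}_k$ with threshold exponent $\zeta=\gamma-\alpha+\eps-\omega$, apply Lemma~\ref{lem-decay} on the good balls, and bound the number of bad balls by $Ck^\omega$ via pigeonhole against \eqref{assG}. The only cosmetic difference is that the paper fixes $C^*=C_1C_n$ so that $\#H^c<k^\omega$ comes out with constant $1$, whereas you keep a harmless multiplicative constant; the substance is identical.
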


\begin{proof}
There exist 
$b^1,\dots,b^{Y}\in\partial\R^n_+$ with $Y\leq Ck^{(n-1)\alpha/2}$
such that 
$$ B^\partial_{1/2}\subset \bigcup_{j=1}^{Y}B^j,\quad\hbox{where}\quad
  B^j:=B^\partial_{\frac12k^{-\alpha/2}}(b^j), $$
and 
\begin{equation} \label{multiynew}
 \#\{j: y\in B^+_{k^{-\alpha/2}}(b^j)\}\leq C_n\quad\hbox{for any }\ y\in\R^n_+.
\end{equation}
Set
$$ \begin{aligned}
  H &:=\bigl\{j\in\{1,2,\dots,Y\}: 
  (a,\sigma,b^j)\in{\mathcal T}_k(\hbox{$\frac12$}k^{-\alpha},k^{-\alpha/2},\gamma-\alpha+\eps-\omega,C_1C_n)\bigr\}, \\
  H^c &:=\{1,2,\dots,Y\}\setminus H.
\end{aligned}
$$
If $j\in H$, then  Lemma~\ref{lem-decay} 
guarantees $w^a_k(y,\sigma)\leq k^\xi$ for $y\in B^j$.
Consequently, 
$$B^\partial_{1/2}\cap\bigcup_{j\in H}B^j \subset G, \ \hbox{ hence } \
  B^\partial_{1/2}\setminus G \subset \bigcup_{j\in H^c}B^j. $$
Now \eqref{assG}, the definition of $H$ and \eqref{multiynew} imply
$\#H^c< k^\omega$, 
hence \eqref{Gc} is true.
\end{proof}

\begin{lemma} \label{lem-ineq}
Fix a positive integer $L=L(n,q)$ such that
\begin{equation} \label{estL}
 \beta(\frac{q+1}{3q-1})^L<\mu. 
\end{equation}
If $\eps,\delta>0$ are small enough, then there exist
$\xi_\ell,\alpha_\ell,\omega_\ell$, $\ell=1,2,\dots L$, 
such that 
\begin{equation} \label{estxi}
\gamma-\delta-\eps >\xi_1\leq\xi_2\leq\dots\leq\xi_L\leq\beta=:\xi_{L+1}
\end{equation}
and the following inequalities are true for $\ell=1,2,\dots,L$:
\begin{equation} \label{bootcond}
0\leq\alpha_\ell <\frac{\xi_\ell}\beta, \quad \xi_l\frac\mu\beta>\gamma-\alpha_\ell+\eps-\omega_\ell, \quad
\omega_\ell-\frac{(n-1)\alpha_\ell}2\leq\gamma-\delta-(q+1)\xi_{\ell+1}.
\end{equation}
\end{lemma}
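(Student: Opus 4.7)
The plan is to eliminate $\alpha_\ell$ and $\omega_\ell$ from \eqref{bootcond}, thereby reducing it to a single scalar recursion on the $\xi_\ell$, and then to solve this recursion backward from $\xi_{L+1}=\beta$.

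First, for fixed $\xi_\ell,\xi_{\ell+1}$ the second and third inequalities of \eqref{bootcond} constrain $\omega_\ell$ to the interval
$$\bigl(\,\gamma+\eps-\alpha_\ell-\xi_\ell\mu/\beta,\;\gamma-\delta-(q+1)\xi_{\ell+1}+(n-1)\alpha_\ell/2\,\bigr].$$
This is nonempty iff $\alpha_\ell\,(n+1)/2>(q+1)\xi_{\ell+1}-\xi_\ell\mu/\beta+\eps+\delta$; combined with the first inequality $0\leq\alpha_\ell<\xi_\ell/\beta$, an admissible $\alpha_\ell$ exists iff
$$(q+1)\xi_{\ell+1}<\xi_\ell\bigl(\mu/\beta+(n+1)/(2\beta)\bigr)-(\eps+\delta). \qquad(\star)$$

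Next I compute the coefficient in $(\star)$: using $1/\beta=2(q-1)$ and $\mu=2\beta-(n-2)/2$,
$$\frac{\mu}{\beta}+\frac{n+1}{2\beta}=2-(n-2)(q-1)+(n+1)(q-1)=3q-1.$$
Writing $\rho:=(q+1)/(3q-1)\in(0,1)$, so that $1-\rho=2(q-1)/(3q-1)$, condition $(\star)$ becomes the recursion $\xi_\ell>\rho\,\xi_{\ell+1}+(\eps+\delta)/(3q-1)$.

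I then construct the $\xi_\ell$ by reverse induction: fix a small $\eta>0$, set $\xi_{L+1}:=\beta$ and
$$\xi_\ell:=\rho\,\xi_{\ell+1}+\frac{\eps+\delta}{3q-1}+\eta\qquad(\ell=L,L-1,\dots,1),$$
so that $(\star)$ holds strictly at every step. Summing the geometric series I obtain
$$\xi_1\leq\rho^L\beta+\frac{\eps+\delta+(3q-1)\eta}{2(q-1)},$$
and the monotonicity $\xi_\ell\leq\xi_{\ell+1}$ is equivalent to $\xi_{\ell+1}\geq [(\eps+\delta)/(3q-1)+\eta]/(1-\rho)$, which holds since $\xi_{\ell+1}\geq\xi_1\geq\rho^L\beta$ and $\rho^L\beta$ is a fixed positive constant. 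By hypothesis \eqref{estL} I have $\mu-\rho^L\beta>0$, so taking $\eps,\delta,\eta$ small enough (depending only on $n$ and $q$ through $\mu-\rho^L\beta$) yields both $\xi_1<\gamma-\delta-\eps$ and the monotonicity condition. Finally, with the $\xi_\ell$ fixed, I pick $\alpha_\ell\in[0,\xi_\ell/\beta)$ strictly exceeding the threshold from the first paragraph, and then $\omega_\ell$ in the resulting nonempty interval, completing \eqref{bootcond}.

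The substantive point is the algebraic identity $\mu/\beta+(n+1)/(2\beta)=3q-1$, which makes the effective contraction ratio of the recursion equal to exactly $(q+1)/(3q-1)$, matching the hypothesis \eqref{estL}. Everything else is elementary bookkeeping of the small parameters $\eps,\delta,\eta$, and the only mild subtlety is verifying that monotonicity and the bound on $\xi_1$ can be achieved simultaneously with a single choice of small parameters, which follows because both required smallness conditions scale the same way in $\mu-\rho^L\beta$.
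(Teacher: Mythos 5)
Your proposal is correct and follows essentially the same route as the paper: you isolate the same contraction ratio $(q+1)/(3q-1)$ via the same algebraic identity (the paper computes $\gamma-\xi/\beta-\xi\mu/\beta=\tfrac{n-1}{2}\tfrac{\xi}{\beta}+\gamma-(3q-1)\xi$, which is your $\mu/\beta+(n+1)/(2\beta)=3q-1$ in disguise), then build $\xi_\ell$ backward from $\xi_{L+1}=\beta$ using \eqref{estL}. Your version is somewhat more explicit in first eliminating $\alpha_\ell,\omega_\ell$ to get a clean scalar recursion, but the underlying idea and the role of every hypothesis are the same.
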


\begin{proof}
Consider $\xi\in[\mu/2,\beta]$ and 
$\tilde\xi\in[\xi,\frac{3q-1}{q+1}\xi)$.
Set also $\alpha:=\frac\xi\beta-\eps_\alpha$, where $\eps_\alpha>0$ is small.
Since
$$ \gamma-\frac\xi\beta-\xi\frac\mu\beta =\frac{n-1}2\frac\xi\beta+\gamma-(3q-1)\xi$$
and $(q+1)\tilde\xi<(3q-1)\xi$, we see that
\begin{equation} \label{nu} 
  \underline\omega:= \gamma-\alpha+\eps-\xi\frac\mu\beta<
  \frac{n-1}2\alpha+\gamma-\delta-(q+1)\tilde\xi=:\overline\omega
\end{equation}
provided $\eps,\eps_\alpha,\delta$ are small enough.
Consequently, we may choose $\omega\in(\underline\omega,\overline\omega)$.

If $\eps,\delta$ are small  enough, 
then \eqref{estL} guarantees the existence of
$\xi_1,\dots,\xi_L$ satisfying \eqref{estxi}, $\xi_1\geq\mu/2$ 
and $\xi_{\ell+1}<\frac{3q-1}{q+1}\xi_\ell$ for $\ell=1,2,\dots,L$. 
Fix $\ell\in\{1,2,\dots,L\}$, set $\xi:=\xi_\ell$, $\tilde\xi:=\xi_{\ell+1}$,
and let $\alpha,\omega$ be as above. Set  $\alpha_\ell:=\alpha$, $\omega_\ell:=\omega$.
Then the definitions of $\alpha_\ell,\omega_\ell$ and \eqref{nu} guarantee \eqref{bootcond}.
\end{proof}

\begin{lemma} \label{lem-wq}
Let $L,\eps,\delta$ and 
$\xi_\ell,\alpha_\ell,\omega_\ell$, $\ell=1,2,\dots L$, be as in
Lemma~\ref{lem-ineq} and let
${\mathcal T}_k={\mathcal T}_k(d_k,r_k,\zeta,C^*)$ be as in Lemma~\ref{lem-decay}.
Assume $(a,\sigma,0)\in {\mathcal T}_k(\frac12 k^{-\alpha_\ell},1,\gamma-\alpha_\ell+\eps,C)$
for $\ell=1,2,\dots L$ and $k$ large and
\begin{equation} \label{estKaplan} 
  \int_{B^\partial_{1/2}}(w^a_k)^{q}(y,\sigma)\,dS_y\leq Ck^{\eps}\ \hbox{ for $k$ large}.
\end{equation}
Then 
\begin{equation} \label{est-wq}
\int_{B^\partial_{1/2}}(w^a_k)^{q+1}(y,\sigma)\,dS_y\leq Ck^{\gamma-\delta}\ \hbox{ for $k$ large}.
\end{equation}
\end{lemma}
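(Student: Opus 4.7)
The plan is to split $B^\partial_{1/2}$ into super-level sets of $w^a_k(\cdot,\sigma)$ at the scales $k^{\xi_\ell}$ produced by Lemma~\ref{lem-ineq}, use Lemma~\ref{lemmaG} at each scale to estimate the measures of the complements, and then add up the resulting layer-cake contributions.

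More precisely, for $\ell=1,2,\dots,L$ I would introduce the good sets
$$G_\ell:=\{y\in B^\partial_{1/2}:w^a_k(y,\sigma)\leq k^{\xi_\ell}\};$$
these are nested since \eqref{estxi} gives $\xi_1\leq\xi_2\leq\dots\leq\xi_L\leq\beta$, and the a priori bound \eqref{bound-w2} makes it consistent to declare $G_{L+1}:=B^\partial_{1/2}$ because $w^a_k\leq C_0k^\beta=C_0k^{\xi_{L+1}}$. The two inequalities $0\leq\alpha_\ell<\xi_\ell/\beta$ and $\xi_\ell\mu/\beta>\gamma-\alpha_\ell+\eps-\omega_\ell$ from \eqref{bootcond} are exactly the hypotheses of Lemma~\ref{lemmaG}, so, combined with the assumed membership $(a,\sigma,0)\in{\mathcal T}_k(\tfrac12 k^{-\alpha_\ell},1,\gamma-\alpha_\ell+\eps,C)$ which furnishes \eqref{assG}, Lemma~\ref{lemmaG} produces
$$|B^\partial_{1/2}\setminus G_\ell|\leq Ck^{\omega_\ell-(n-1)\alpha_\ell/2},\qquad\ell=1,2,\dots,L,$$
for $k$ large.

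Next I would decompose
$$\int_{B^\partial_{1/2}}(w^a_k)^{q+1}(y,\sigma)\,dS_y
 =\int_{G_1}(w^a_k)^{q+1}\,dS_y+\sum_{\ell=1}^{L}\int_{G_{\ell+1}\setminus G_\ell}(w^a_k)^{q+1}\,dS_y.$$
For the $G_1$ piece I would factor $(w^a_k)^{q+1}\leq k^{\xi_1}(w^a_k)^q$ and feed in the Kaplan-type hypothesis \eqref{estKaplan}, obtaining a bound $Ck^{\xi_1+\eps}\leq Ck^{\gamma-\delta}$ by the first inequality in \eqref{estxi}. For each annular piece $G_{\ell+1}\setminus G_\ell$ I would combine the pointwise bound $w^a_k\leq k^{\xi_{\ell+1}}$ (valid there because $G_{\ell+1}\setminus G_\ell\subset G_{\ell+1}$, with the crude bound \eqref{bound-w2} absorbing a constant when $\ell=L$) with the measure bound on the ambient $B^\partial_{1/2}\setminus G_\ell$, giving
$$\int_{G_{\ell+1}\setminus G_\ell}(w^a_k)^{q+1}\,dS_y
 \leq Ck^{(q+1)\xi_{\ell+1}+\omega_\ell-(n-1)\alpha_\ell/2}\leq Ck^{\gamma-\delta},$$
the last inequality being exactly the third condition in \eqref{bootcond}. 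Summing the $L+1$ contributions yields \eqref{est-wq}.

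The bulk of the proof is careful accounting — the parameters $\xi_\ell,\alpha_\ell,\omega_\ell$ from Lemma~\ref{lem-ineq} were chosen precisely so that each piece of the decomposition clears the target exponent $\gamma-\delta$. The only point requiring a little care is the lowest slice $G_1$: one cannot simply bound $(w^a_k)^{q+1}$ by $k^{(q+1)\xi_1}$ times the measure of $G_1$, since $(q+1)\xi_1$ can exceed $\gamma-\delta$ when $\xi_1$ is close to $\gamma-\delta$. Leaving one power of $w^a_k$ inside the integral and absorbing it via the Kaplan-type estimate \eqref{estKaplan} is what closes the chain at the low end, and is the only place where a hypothesis other than the measure bounds furnished by Lemma~\ref{lemmaG} enters.
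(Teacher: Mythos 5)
Your proposal is correct and follows essentially the same route as the paper: the same decomposition $B^\partial_{1/2}=G_1\cup\bigcup_{\ell=1}^L(G_{\ell+1}\setminus G_\ell)$, the same application of Lemma~\ref{lemmaG} with \eqref{bootcond} for the annular pieces, and the same factorization $(w^a_k)^{q+1}\leq k^{\xi_1}(w^a_k)^q$ combined with \eqref{estKaplan} on $G_1$. The paper does not spell out the absorption of the constant $C_0$ from \eqref{bound-w2} on the top layer $\ell=L$, but your handling of it is consistent with what is implicitly done there.
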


\begin{proof}
Given $\ell\in\{1,2,\dots,L\}$, set $\xi=\xi_\ell$, $\alpha=\alpha_\ell$, $\omega=\omega_\ell$,
and let $G$ be the set in Lemma~\ref{lemmaG}.
Set $G_\ell:=G$ and $G_{L+1}:=B^\partial_{1/2}$. 
Lemma~\ref{lemmaG} and \eqref{bootcond} guarantee
$$|G_{\ell+1}\setminus G_\ell|\leq|B^\partial_{1/2}\setminus G_\ell|\leq Ck^{\omega_\ell-(n-1)\alpha_\ell/2}
                 \leq Ck^{\gamma-\delta-(q+1)\xi_{\ell+1}},$$
hence
$$\int_{G_{\ell+1}\setminus G_\ell}(w^a_k)^{q+1}(y,\sigma)\,dS_y
 \leq Ck^{(q+1)\xi_{\ell+1}}|G_{\ell+1}\setminus G_\ell| \leq Ck^{\gamma-\delta}.$$
In addition, the definition of $G_1$, \eqref{estKaplan} and \eqref{estxi} imply
\begin{equation} \label{estG1}
 \int_{G_1}(w^a_k)^{q+1}(y,\sigma)\,dS_y \leq k^{\xi_1}\int_{G_1}(w^a_k)^{q}(y,\sigma)\,dS_y 
 \leq Ck^{\xi_1+\eps} \leq Ck^{\gamma-\delta}.
\end{equation}
Since $B^\partial_{1/2}=G_1\cup\bigcup_{\ell=1}^L (G_{\ell+1}\setminus G_\ell)$,
the conclusion follows.
\end{proof}

{\bf Step 4: The choice of a suitable time.}
The proof of \eqref{Em1} will be based on estimates of $w^{a^i}_k(\cdot,s^*)$,
$i=1,2,\dots,X$, where
$s^*=s^*(k,a)\in[s_k-m,s_k-m+1]$ is a suitable time. 

\begin{lemma} \label{lem-step4}  
Let $\eps,\gamma,C_1,C_2>0$, $\alpha_1,\alpha_2,\dots,\alpha_L\geq0$,
and, given $k=1,2,\dots$, let $X_k$ be a positive integer satisfying 
$X_k\leq k^{\eps/2}$ and $\sigma_k\in\R$. Set $J_k:=[\sigma_k,\sigma_k+1]$, 
$\tilde J_k:=[\sigma_k+1/2,\sigma_k+1]$, and
assume that  $f^1_k,\dots,f^{X_k}_k,g^1_k,\dots,g^{X_k}_k\in C(J_k,\R^+)$
satisfy 
\begin{equation} \label{figi}
\int_{J_k}f^i_k(s)\,ds\leq C_1k^\gamma, \quad \int_{J_k}g^i_k(s)\,ds\leq C_2, \quad
i=1,2,\dots X_k,\ k=1,2,\dots.
\end{equation}
Then there exists $k_1=k_1(\eps,L)$ with the following property:
If $k\geq k_1$, then there exists $s^*=s^*(k)\in\tilde J_k$ such that 
$$ \int_{s^*-\frac12k^{-\alpha_\ell}}^{s^*}f^i_k(s)\,ds\leq C_1k^{\gamma-\alpha_\ell+\eps},
\quad  f^i_k(s^*)\leq C_1k^{\gamma+\eps}, \quad g^i_k(s^*)\leq C_2k^\eps $$
for all $i=1,2,\dots,X_k$ and $\ell=1,2,\dots,L$.
\end{lemma}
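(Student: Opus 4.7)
The plan is to produce $s^*$ by a measure-theoretic pigeonhole argument, showing that for each of the three types of inequalities the ``bad set'' in $\tilde J_k$ has small Lebesgue measure, and then ensuring that the union of all bad sets does not cover $\tilde J_k$ once $k$ is large.

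First I would handle the pointwise conditions. By Chebyshev / Markov, for each fixed $i$ the set
$$B^i_{k,1}:=\{s\in J_k: f^i_k(s)>C_1 k^{\gamma+\eps}\}$$
has measure at most $k^{-\eps}$, using $\int_{J_k} f^i_k\le C_1 k^\gamma$. Similarly
$$B^i_{k,2}:=\{s\in J_k: g^i_k(s)>C_2 k^{\eps}\}$$
has measure at most $k^{-\eps}$. For the integral condition, I would apply Fubini: since $\alpha_\ell\ge 0$, the interval $[s^*-\tfrac12 k^{-\alpha_\ell},s^*]$ is contained in $J_k$ for every $s^*\in\tilde J_k$, and exchanging the order of integration gives
$$\int_{\tilde J_k}\!\int_{s^*-\frac12 k^{-\alpha_\ell}}^{s^*}\!\! f^i_k(s)\,ds\,ds^*
\le \tfrac12 k^{-\alpha_\ell}\int_{J_k} f^i_k(s)\,ds\le \tfrac12 C_1 k^{\gamma-\alpha_\ell}.$$
Another application of Markov then bounds the measure of
$$B^i_{k,\ell,3}:=\Bigl\{s^*\in\tilde J_k: \int_{s^*-\frac12 k^{-\alpha_\ell}}^{s^*}\!\! f^i_k(s)\,ds>C_1 k^{\gamma-\alpha_\ell+\eps}\Bigr\}$$
by $\tfrac12 k^{-\eps}$.

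To finish, I sum over $i\in\{1,\dots,X_k\}$ and $\ell\in\{1,\dots,L\}$. The total measure of the bad set inside $\tilde J_k$ is bounded by
$$X_k\bigl(k^{-\eps}+k^{-\eps}+\tfrac12 L k^{-\eps}\bigr)\le (L+2)\,k^{\eps/2}\cdot k^{-\eps}=(L+2)k^{-\eps/2},$$
using the hypothesis $X_k\le k^{\eps/2}$. Since $|\tilde J_k|=1/2$, this bad set fails to cover $\tilde J_k$ as soon as $(L+2)k^{-\eps/2}<1/2$, which holds for all $k\ge k_1(\eps,L)$. Any $s^*$ in the complement satisfies all three estimates simultaneously for every admissible $i$ and $\ell$.

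There is no real obstacle here beyond careful bookkeeping: the only points requiring attention are (i) verifying that the assumption $\alpha_\ell\ge 0$ keeps the integration window $[s^*-\tfrac12 k^{-\alpha_\ell},s^*]$ inside $J_k$ for $s^*\in\tilde J_k$ (which is why $\tilde J_k$ is the right half of $J_k$), and (ii) choosing the thresholds in Markov's inequality so that the exponent $k^{-\eps}$ exactly beats the factor $X_k\le k^{\eps/2}$, with room to spare for summing over the $L$ values of $\ell$.
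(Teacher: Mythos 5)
Your proof is correct and uses essentially the same argument as the paper: Markov's inequality for the two pointwise bounds, Fubini plus Markov for the running-integral bound, and then a count of bad sets against $|\tilde J_k|=1/2$ using $X_k\le k^{\eps/2}$. The paper's proof is organized identically (defining $h^{i,\ell}_k$, the bad sets $A^i_k,B^i_k,C^{i,\ell}_k$, and showing their union has measure $<1/2$), so there is nothing substantively different to report.
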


\begin{proof}
Set
$$h^{i,\ell}_k(s):=\int_{s-\frac12k^{-\alpha_\ell}}^{s}f^i_k(\tau)\,d\tau, \ \ s\in\tilde J_k,\ \
i=1,2,\dots X_k,\ \ \ell=1,2,\dots,L,\ \ k=1,2,\dots.$$
Then 
\begin{equation} \label{hiell}
\begin{aligned}
\int_{\tilde J_k} &h^{i,\ell}_k(s)\,ds
  = \int_{\tilde J_k}\int_{s-\frac12k^{-\alpha_\ell}}^{s}f^i_k(\tau)\,d\tau\,ds
  = \int_{\tilde J_k}\int_0^{\frac12k^{-\alpha_\ell}}f^i_k(s-\tau)\,d\tau\,ds \\
 &= \int_0^{\frac12k^{-\alpha_\ell}}\int_{\tilde J_k}f^i_k(s-\tau)\,ds\,d\tau
  \leq \int_0^{\frac12k^{-\alpha_\ell}}\int_{J_k} f^i_k(s)\,ds\,d\tau\leq C_1k^{\gamma-\alpha_\ell}.
\end{aligned}
\end{equation}
Set
$$ \begin{aligned}
A^i_k &:=\{s\in\tilde J_k: f^i_k(s)>C_1k^{\gamma+\eps}\},\quad
   B^i_k :=\{s\in\tilde J_k: g^i_k(s)>C_2k^{\eps}\},\\
   C^{i,\ell}_k &:=\{s\in\tilde J_k: h^{i,\ell}_k(s)>C_1k^{\gamma-\alpha_\ell+\eps}\}.
\end{aligned}$$
Then \eqref{figi} and \eqref{hiell} show that the measure of each of the sets
$A^i_k,B^i_k,C^{i,\ell}_k$ can be estimated above by $k^{-\eps}$.
Since the number of these sets (with given index $k$) is $(L+2)X_k\leq (L+2)k^{\eps/2}$,
their union $U_k:=\bigcup_i A^i_k\cup\bigcup_i B^i_k\cup\bigcup_{i,\ell}C^{i,\ell}_k$
has measure less than 1/2 for $k\geq k_1$, hence for $k\geq k_1$ 
there exists $s^*=s^*(k)\in\tilde J_k\setminus U_k$.
Obviously, $s^*$ has the required properties.
\end{proof}

Consider $m$, $\gamma\in[\mu,\beta]$ and $a\in\partial\R^n_+$ fixed,
$J_k:=[s_k-m,s_k-m+1]$,
and let $a^i$, $i=1,2,\dots,X$ be as in \eqref{B} (recall that $a^i$ and $X$ depend on $k$;
$X\leq C(\log k)^{(n-1)/2}$).
Let $L,\eps$ and $\alpha_\ell$, $\ell=1,2,\dots,L$ be from Lemma~\ref{lem-ineq}.
Set 
$$f^i_k(s):=\int_{\R^n_+}(w^{a^i}_k)_s^2(y,s)\rho(y)\,dy,\quad
g^i_k(s):=\int_{\partial\R^n_+}(w^{a^i}_k)^q(y,s)\rho(y)\,dS_y, \quad i=1,2,\dots X.$$
Estimates \eqref{Es}, \eqref{Em} and \eqref{GK5} guarantee
that the assumptions of  Lemma~\ref{lem-step4} are satisfied
with $C_1,C_2$ independent of $a$.
Consequently, if $k\geq k_1$, then
there exists $s^*=s^*(k,a)\in\tilde J_k:=[s_k-m+1/2,s_k-m+1]$ such that the following estimates are true  
for $a\in\partial\R^n$, $w=w_k^{a^i}$,
$i=1,2,\dots X$, $\ell=1,2,\dots L$: 
\begin{equation} \label{star1}
 \int_{s^*-\frac12k^{-\alpha_\ell}}^{s^*}\int_{\R^n_+}w_s^2\rho\,dy\,ds \leq C_1k^{\gamma-\alpha_\ell+\eps}, 
\end{equation}
\begin{equation} \label{star2}
\left. \begin{aligned}
 \int_{\R^n_+}w_s^2(y,s^*)\rho(y)\,dy &\leq C_1k^{\gamma+\eps}, \\
 \int_{\partial\R^n_+}w^q(y,s^*)\rho(y)\,dS_y &\leq C_2k^\eps.
\end{aligned}\quad\right\} 
\end{equation}

{\bf Step 5: Energy estimates.}
Let $L,\eps,\delta$ and 
$\xi_\ell,\alpha_\ell,\omega_\ell$, $\ell=1,2,\dots L$, be as in
Lemma~\ref{lem-ineq} and let
${\mathcal T}_k={\mathcal T}_k(d_k,r_k,\zeta,C^*)$ be as in Lemma~\ref{lem-decay}.
Let $a\in\partial\R^n_+$ be fixed and $s^*=s^*(k,a)$ be from Step~4.
Notice that \eqref{star1} guarantees 
$(a^i,s^*,0)\in{\mathcal T}_k(\frac12k^{-\alpha_\ell},1,\gamma-\alpha_\ell+\eps,C_1/\rho(1))$
for $i=1,2,\dots,X$ and $\ell=1,2,\dots,L$,
and \eqref{star2} implies \eqref{estKaplan} with $a=a^i$, $i=1,2,\dots,X$.
Consequently, Lemma~\ref{lem-wq} implies
$$  \int_{B^\partial_{1/2}}(w^{a^i}_k)^{q+1}(y,s^*)\,dS_y\leq Ck^{\gamma-\delta}\ 
 \hbox{ for $i=1,2,\dots,X$ and $k$ large}, $$ 
and using \eqref{yyi} we obtain
$$ \begin{aligned}
\int_{B^\partial_{R_k}}(w^a_k)^{q+1}(y,s^*)\,dS_y
 &\leq \sum_{i=1}^X\int_{B^\partial_{1/2}}(w^{a^i}_k)^{q+1}(y,s^*)\,dS_y
 \leq Ck^{\gamma-\delta}(\log k)^{(n-1)/2} \\
 &\leq Ck^{\gamma-\delta/2}\quad  \hbox{ for $k$ large}.
\end{aligned} $$ 
In addition, since
$$  \rho(y)=e^{-|y|^2/8-|y|^2/8}\leq k^{-n}e^{-|y|^2/8},\quad\hbox{for }\ |y|>R_k,$$
we have
$$
\int_{\partial\R^n_+\setminus B^\partial_{R_k}}(w^a_k)^{q+1}(y,s^*)\,dS_y
 \leq C\int_{\partial\R^n_+\setminus B^\partial_{R_k}}k^{(q+1)\beta-n}e^{-|y|^2/8}\,dS_y  \leq C, $$
hence
\begin{equation} \label{estwq1}
\int_{\partial\R^n_+}(w^a_k)^{q+1}(y,s^*)\,dS_y \leq Ck^{\gamma-\delta/2}\quad  \hbox{ for $k$ large}.
\end{equation}
Denoting $w:=w^a_k$, \eqref{star2}, \eqref{monotone-int} and \eqref{estwq1} imply
\begin{equation} \label{intwwsnew}
\begin{aligned}
\Big|\int_{\R^n_+} &(ww_s)(y,s^*)\rho(y)\,dy\Big| 
 \leq \Bigl(\int_{\R^n_+} w^2(y,s^*)\rho(y)\,dy\Bigr)^{1/2}
       \Bigl(\int_{\R^n_+} w_s^2(y,s^*)\rho(y)\,dy\Bigr)^{1/2} \\
  &\leq C\Bigl(\int_{\R^n_+} w^{q+1}(y,s^*)\rho(y)\,dy\Bigr)^{1/(q+1)}
         k^\frac{\gamma+\eps}2 
   \leq Ck^{\frac{\gamma-\delta/2}{q+1}+\frac{\gamma+\eps}2}
   \leq Ck^{\gamma-\delta/2}, 
\end{aligned} 
\end{equation}
provided $\eps$ and $\delta$ are small enough.
Finally, \eqref{Ews}, \eqref{estwq1} and \eqref{intwwsnew} guarentee
$E_k^a(s^*)\leq  Ck^{\tilde\gamma}$
with $\tilde\gamma:=\gamma-\delta/2$ and $k$ large,
and the monotonicity of $E_k^a$ implies \eqref{Em1}.
This concludes the proof.
\qed

\begin{remark}  \label{remp}
\rm
A straighforward modification of the proof of Theorem~1 provides
a simpler proof of \cite[Theorem~1]{Q-DMJ}. One just has
to replace $q$ with $p$, set $\beta:=1/(p-1)$,
replace $\R^n_+,\partial\R^n_+$ and $B^+_r,B^\partial_r$
with $\R^n$ and $B_r$, respectively, $(n-1)$ with $n$, and do a few more
straightforward changes. In particular, \eqref{monotone} should be removed and
\eqref{estL} should be replaced with $\beta\bigl(\frac{p+1}{2p}\bigr)^L<\mu$.
\end{remark}

\begin{remark}  \label{rempstar}
\rm
If $\beta<\mu+1$, then Lemmas~\ref{lemmaG}--\ref{lem-ineq} are not needed
and the proof of (a modification of) Lemma~\ref{lem-wq} is simpler. In fact,
the inequality $\beta<\mu+1$ implies 
$\gamma\frac\mu\beta>\gamma-\frac\gamma\beta$, hence if $\delta,\eps>0$
are small enough and $\xi:=\gamma-\eps-\delta$, then there exists
$\alpha<\xi/\beta$ such that $\xi\frac\mu\beta>\gamma-\alpha+\eps$.
If $(a,\sigma,0)\in{\mathcal T}_k(\frac12k^{-\alpha},1,\gamma-\alpha+\eps,C)$, then
Lemma~\ref{lem-decay} guarantees $w^a_k(\cdot,\sigma)\leq k^\xi$ on $B^\partial_{1/2}$,
hence assumption \eqref{estKaplan} implies
\eqref{estG1} with $G_1:=B^\partial_{1/2}$ and $\xi_1:=\xi$.
Consequently, \eqref{est-wq} is true.

Notice that if $\beta=1/(p-1)$, then the inequality $\beta<\mu+1$ is equivalent
to the inequality $p<p^*$ in \cite{Q-DMJ}.
\end{remark}



\begin{thebibliography}{15}
%
\bibitem{CF}
M. Chleb\'\i k, M. Fila:
On the blow-up rate for the heat equation with a nonlinear boundary condition.
Math. Methods Appl. Sci. 23 (2000), 1323--1330 
%
\bibitem{DFL}
K. Deng, M. Fila,  H.A. Levine:
On critical exponents for a system of heat equations
coupled in the boundary conditions.
Acta Math. Univ. Comenianae 63 (1994), 169--192
%
\bibitem{GL}
V.A. Galaktionov, H.A. Levine:
On critical Fujita exponents for heat equations with
nonlinear flux conditions on the boundary.
Israel J. Math. 94 (1996), 125--146 
%
\bibitem{GK} 
Y. Giga, R.V. Kohn:
Characterizing blowup using similarity variables.
Indiana Univ. Math. J. 36 (1987), 1--40
%
\bibitem{Har14}
J. Harada:
Positive solutions to the Laplace equation with nonlinear boundary
conditions on the half space.
Calc. Var. 50 (2014), 399--435
%
\bibitem{Har20}
J. Harada:
Boundary behavior of blowup solutions for a heat equation 
with a nonlinear boundary condition.
NoDEA 27 (2020), Article~6
%
\bibitem{Hu}
B. Hu:
Nonexistence of a positive solution of the Laplace
  equation with a nonlinear boundary condition.
Differ. Integral Equations 7 (1994), 301--313 
%
\bibitem{HY} 
B. Hu, H.-M. Yin:
The profile near blowup time for solution of the heat equation with a
nonlinear boundary condition.
Trans. Amer. Math. Soc. 346 (1994), 117--135 
%
\bibitem{LSU}
O.A. Lady\v{z}enskaja, V.A. Solonnikov, N.N. Ural'ceva:
Linear and quasilinear equations of parabolic type.
Amer.\ Math.\ Soc., Transl.\ Math.\  Monographs,
Providence, RI 1968
%
\bibitem{Lieb}
G.M. Lieberman: Second order parabolic differential equations.
World Scientific, Singapore 2005 (revised edition)
%
\bibitem{PQS}
P. Pol\'a\v cik, P. Quittner, Ph. Souplet:
Singularity and decay estimates in superlinear problems
via Liouville-type theorems.  Part II: parabolic equations.
Indiana Univ. Math. J. 56 (2007), 879--908
%
\bibitem{Q}
P. Quittner:
Liouville theorems for scaling invariant superlinear
parabolic problems with gradient structure.
Math. Ann. 364 (2016), 269--292 
%
\bibitem{Q20}
P. Quittner:
Liouville theorems for superlinear parabolic problems with gradient
structure.
J. Elliptic Parabolic Equations 6 (2020), 145--153
%
\bibitem{Q-DMJ}
P. Quittner:
Optimal Liouville theorems for superlinear parabolic problems.
Duke Math. J., to appear (Preprint arXiv:2003.13223)
%
\bibitem{QS-D}
P. Quittner, Ph. Souplet:
Parabolic Liouville-type theorems via their elliptic counterparts.
Discrete Contin. Dynam. Systems, Supplement 2011
(Proceedings of the 8th AIMS International Conference
on Dynamical Systems, Differential Equations and Applications, Dresden
2010), 1206--1213 
%
\bibitem{QS19}
P. Quittner, Ph. Souplet:
Superlinear parabolic problems. Blow-up, global existence and steady
states.
Birkh\"auser Advanced Texts, Birkh\"auser, Basel 2019 (2nd edition) 
%
\end{thebibliography}
\end{document}